\theoremstyle{plain}
\newtheorem{teorema}{Theorem}[section]
\newtheorem{proposizione}[teorema]{Proposition}
\newtheorem{lemma}[teorema]{Lemma}
\newtheorem*{theorem*}{Theorem}
\theoremstyle{definition}
\newtheorem{definizione}{Definition}[section]
\theoremstyle{remark}
\newcommand{\Mass}{\mathbb{M}}
\newcommand{\R}{\mathbb{R}}
\newcommand{\N}{\mathbb{N}}
\newcommand{\im}{\mathrm{im}}
\newcommand{\Haus}{\mathscr{H}}
\newcommand{\Leb}{\mathscr{L}}
\newcommand{\Tan}{\mathrm{Tan}}
\newcommand{\supp}{\mathrm{supp}}
\newcommand{\Gr}{\mathrm{Gr}}
\newcommand{\dV}{d_V\kern-1pt}
\newcommand{\dW}{d_W\kern-1pt}
\newcommand{\trait}[3]{\vrule width #1ex height #2ex depth #3ex}
\newcommand{\trace}{\mathchoice%
  {\mathbin{\trait{.12}{1.2}{.03}\trait{.8}{0.09}{0.03}}}
  {\mathbin{\trait{.12}{1.2}{.03}\trait{.8}{0.09}{0.03}}}
  {\mathbin{\hskip.15ex\trait{.09}{.84}{0.02}\trait{.56}{.07}{.02}}\hskip.15ex}
  {\mathbin{\trait{.07}{.6}{.01}\trait{.4}{.06}{.01}}}}
\newcounter{const}
\newcounter{eps}
\newcommand{\vertiii}[1]{{\left\vert\kern-0.25ex\left\vert\kern-0.25ex\left\vert #1 
    \right\vert\kern-0.25ex\right\vert\kern-0.25ex\right\vert}}
\title{\normalfont\spacedallcaps{Characterization of rectifiability via Lusin type approximation}} 
\author{\spacedlowsmallcaps{Andrea Marchese \textsuperscript{*} and Andrea Merlo\textsuperscript{**}}}
\date{}
\begin{document}

\renewcommand{\sectionmark}[1]{\markright{\spacedlowsmallcaps{#1}}} 
\lehead{\mbox{\llap{\small\thepage\kern1em\color{halfgray} \vline}\color{halfgray}\hspace{0.5em}\rightmark\hfil}} 
\pagestyle{scrheadings}
\maketitle 
\setcounter{tocdepth}{2}

{\let\thefootnote\relax\footnotetext{* \textit{Università di Trento, Dipartimento di Matematica, Via Sommarive, 14, 38123 Povo (TN), Italy.}}}
{\let\thefootnote\relax\footnotetext{** \textit{Université Paris-Saclay, 307 Rue Michel Magat Bâtiment, 91400 Orsay, France.}}}

{\rightskip 1 cm
\leftskip 1 cm
\parindent 0 pt
\footnotesize

	% abstract, keywords and MSC numbers
	%
{\textsc Abstract.}
We prove that a Radon measure $\mu$ on $\mathbb{R}^n$ can be written as $\mu=\sum_{i=0}^n\mu_i$, where each of the $\mu_i$ is an $i$-dimensional rectifiable measure if and only if for every Lipschitz function $f:\mathbb{R}^n\to\mathbb{R}$ and every $\varepsilon>0$ there exists a function $g$ of class $C^1$ such that $\mu(\{x\in\mathbb{R}^n:g(x)\neq f(x)\})<\varepsilon$.   
\par
\medskip\noindent
{\textsc Keywords:} Lipschitz functions, differentiability, Lusin type approximation.
\par
\medskip\noindent
{\textsc MSC (2020): 26B05, 26A27} .
\par
}

%\tableofcontents
\section{Introduction}
A fundamental yet simple consequence of Rademacher's theorem and Whitney's theorem is the fact that Lipschitz functions on the Euclidean space admit a Lusin type approximation with $C^1$-functions, namely for every Lipschitz function $f:\R^n\to\R$ and every $\varepsilon>0$ there exists a function $g:\R^n\to\R$ of class $C^1$ such that
$$\Leb^n(\{x\in\R^n:g(x)\neq f(x)\})<\varepsilon,$$
where $\Leb^n$ denotes the Lebesgue measure, see \cite[Theorem5.3]{Simon1983LecturesTheory}. This fact has a central role in many basic results in Geometric Measure Theory, including the existence of the approximate tangent space to a rectifiable set, see \cite[Lemma 11.1]{Simon1983LecturesTheory}, and the validity of area and coarea formulas, see \cite[\S 12]{Simon1983LecturesTheory}.\\

On one side, this approximation property does not only hold for the Lebesgue measure: for instance it holds trivially for a Dirac delta. It is not difficult to see that the same property holds for any rectifiable measure and clearly the class of Radon measures for which the property holds is closed under finite sums.

On the other side, it is known that there are measures $\mu$ for which Lipschtz functions do not admit a Lusin type approximation with respect to $\mu$ with functions of class $C^1$, see \cite{Mar}. In this note we go beyond such result, proving that the validity of such approximation property characterizes rectifiable measures, in the following sense.

\begin{teorema}\label{t:main}
Let $\mu$ be a positive Radon measure on $\R^n$. The measure $\mu$ can be written as $\mu=\sum_{i=0}^n\mu_i$, where each of the $\mu_i$ is an $i$-dimensional rectifiable measure if and only if for every Lipschitz function $f:\R^n\to\R$ and every $\varepsilon>0$ there exists a function $g$ of class $C^1$ such that $$\mu(\{x\in\R^n:g(x)\neq f(x)\})<\varepsilon.$$
\end{teorema}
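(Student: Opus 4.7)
\emph{Forward implication.} This direction is essentially a standard application of Whitney's extension theorem. The plan is to show, more generally, that the class of Radon measures on $\R^n$ satisfying the Lusin-type approximation property is closed under countable sums (by an iterative Whitney-patching of successive $C^1$ approximations), and then to verify the property for a single $i$-rectifiable measure $\mu_i$ at a time. The case $i=0$ is elementary, since $\mu_0$ is a countable sum of Dirac masses. For $i \geq 1$, $\mu_i$ is carried, up to a $\mu_i$-null set, by countably many $C^1$ $i$-dimensional submanifolds $\{M_k\}_{k \in \N}$ of $\R^n$; on each $M_k$, the restriction $f|_{M_k}$ is Lipschitz, and a chart argument combined with the classical Lusin--Whitney theorem on $\R^i$ yields a $C^1$ function on $M_k$ coinciding with $f|_{M_k}$ on a large subset. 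A Whitney extension from $M_k$ to $\R^n$, together with a patching over $k$ which allocates error $\eps\, 2^{-k}$ to the $k$-th piece, produces the desired global $g \in C^1(\R^n)$.

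\emph{Backward implication.} This is the heart of the theorem. The plan is to leverage the Alberti--Marchese theory of the decomposability bundle $V(\mu,\cdot)$: a $\mu$-measurable assignment to $\mu$-a.e.~$x$ of a linear subspace $V(\mu, x) \subset \R^n$, characterized as the maximal subspace along which every Lipschitz function on $\R^n$ is $\mu$-a.e.~differentiable at $x$. The natural candidate decomposition is $\mu_i := \mu \trace S_i$ where $S_i := \{x \in \R^n : \dim V(\mu, x) = i\}$; since $\dim V(\mu, x) \leq n$, the family $\{S_i\}_{i=0}^{n}$ partitions $\R^n$ up to a $\mu$-null set, so the only remaining task is to prove that each $\mu_i$ is $i$-rectifiable. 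For this I envisage a contrapositive argument: if some $\mu_i$ charged a purely $i$-unrectifiable set, one would adapt and extend the construction of \cite{Mar} to exhibit a Lipschitz function $f : \R^n \to \R$ and a threshold $\eps_0 > 0$ with the property that for every $g \in C^1(\R^n)$ one has $\mu(\{g \neq f\}) \geq \eps_0$, contradicting the hypothesis. The function $f$ is to be designed so that its non-differentiability at $\mu$-typical points encodes the failure of rectifiability and is incompatible, on any $\mu$-large coincidence set, with the Jacobian behavior of any $C^1$ function.

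\emph{Main obstacle.} The core difficulty is to convert the set-theoretic coincidence hypothesis $\mu(\{f = g\}) \geq \mu(\R^n) - \eps$ into a genuinely pointwise statement about the regularity of $f$ at $\mu$-typical points, given that $\mu$-density-one points of a set need not be Lebesgue-density-one points and $\mu$ may be entirely singular with respect to $\Leb^n$. My plan to overcome this is to avoid direct Lebesgue-density reasoning altogether, and instead to combine two ingredients: (i) the dual side of the Alberti--Marchese theorem, which supplies obstruction Lipschitz functions that are genuinely non-differentiable in any Borel-selected direction outside $V(\mu, x)$ on a positive $\mu$-measure set; and (ii) a tangent-measure analysis of $\mu_i$ at $\mu_i$-typical points, showing that the Lusin hypothesis, tested on a countable dense family of such obstruction functions, forces the tangent measures of $\mu_i$ to be supported on $i$-planes with uniform density. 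Preiss-type structure theorems for measures with flat tangents then yield the $i$-rectifiability of $\mu_i$ and complete the decomposition $\mu = \sum_{i=0}^{n} \mu_i$.
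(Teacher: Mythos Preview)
Your forward implication is fine and matches the paper's approach: reduce to finitely many disjoint compact $C^1$ pieces carrying most of $\mu$, then apply Whitney extension.

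For the backward implication you have the right scaffolding---the decomposition $\mu_i=\mu\trace\{x:\dim V(\mu,x)=i\}$ and a contrapositive via an Alberti--Marchese obstruction function---but the plan has a genuine gap precisely at the obstacle you flag. The difficulty is that the non-differentiability of the Alberti--Marchese function $f$ at a point $x$ is witnessed by sequences $y_i\to x$ which, a priori, need not lie in $\supp(\mu)$. If they do not, then knowing that a $C^1$ function $g$ agrees with $f$ on a $\mu$-full set tells you nothing about $f(y_i)$ versus $g(y_i)$, and no contradiction is available. Your proposed fix---test the Lusin hypothesis on a countable family of obstruction functions and read off flatness of tangent measures---does not close this: the Lusin property produces, for each Lipschitz $f$, a \emph{different} approximant $g$ on a \emph{different} coincidence set, and there is no mechanism by which this family of statements constrains $\Tan(\mu_i,x)$. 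Preiss-type rectifiability criteria take as input structural hypotheses on the \emph{measure} (densities, flat tangents), not approximation properties of auxiliary functions.

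What the paper actually does is strengthen the Alberti--Marchese construction so that the non-differentiability of $f$ at $\mu$-a.e.\ $x$ in a set $E$ of positive measure is witnessed by sequences $y_i\in\supp(\mu)$ (Proposition~\ref{coroll}). Tangent measures and a Preiss-type argument do appear, but only in a preparatory geometric lemma (Lemma~\ref{lemma:coni}): if $\mu$ has $k$-dimensional bundle and gives zero mass to every $k$-rectifiable set, then at $\mu$-a.e.\ $x$ the support $\supp(\mu)$ must meet the complement of every cone around $V(\mu,x)$ at all scales---otherwise every tangent measure would be flat and $\mu$ would be $k$-rectifiable. This guarantees that witness points in $\supp(\mu)$ exist; a Baire-category argument in a tailored function space then manufactures the single bad $f$. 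The ``witnesses in the support'' refinement is the new idea, and it is exactly what your plan is missing.
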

The proof of the "only if" part of Theorem \ref{t:main} is a simple application of Whitney's theorem. The proof of the "if" part exploits some tools introduced in \cite{AlbMar}, including the notion of \emph{decomposability bundle} of a measure $\mu$, see \cite[\S 2.6]{AlbMar}: a map $x\mapsto V(\mu,x)$ which detects the maximal subspaces along which Lipschitz functions are differentiable $\mu$-almost everywhere. For the purposes of this paper, we need to refine the result \cite[Theorem 1.1 (ii)]{AlbMar} on the existence of Lipschitz functions which are non-differentiable along directions which do not belong to the decomposability bundle. In \cite{AlbMar}, such non-differentiability is proved by finding a Lipschitz function $f$ and for $\mu$-almost every point $x$ a sequence of points $y_i:=x+t_iv\in\R^n$ converging to $x$ along a direction $v\not\in V(\mu,x)$, such that the corresponding incremental ratios $(f(y_i)-f(x))/t_i$ do not converge. Here we need to find a function $f$ such that there exist points $y_i$ as above, with the additional requirement that $y_i\in\supp(\mu)$, see Proposition \ref{coroll}. For a non rectifiable measure $\mu$, the existence of a $\mu$-positive set of points $x$ for which there are points $y_i\in\supp(\mu)$ approaching $x$ along a direction $v\not\in V(\mu,x)$ is guaranteed by Lemma \ref{lemma:coni}.\\  

It is worth noting that Lusin-type approximation theorems are an interesting tool to study even in metric measure spaces. In \cite{julia2021lipschitz} the authors proved a suitable extension of Lusin's approximation-type theorem for the surface measure of $1$-codimensional $C^1_\mathbb{H}$-rectifiable surfaces in the Heisenberg groups $\mathbb{H}^n$, $n\geq 2$ and where the regular approximation of Lipschitz functions are found in the class of $C^1_\mathbb{H}$-regular functions. The authors also prove that in $\mathbb{H}^1$ there is a regular surface and a Lipschitz function that cannot be approximated by $C^1_\mathbb{H}$-regular functions. 
This different behaviour is connected to the algebraic structure of the tangents to $1$-codimensional regular surfaces in the Heisenberg groups $\mathbb{H}^n$ when $n=1$ or $n\geq 2$. This will be object of further investigation in general Carnot groups thanks to the techniques developed in \cite{reversepansu}.

\section{Notation and preliminaries}
We denote by $U(x,r)$ the open ball in $\R^n$ with center $x$ and radius $r$ and by $B(x,r)$ the closed ball. In addition, for a Borel set $E$ and a $\delta>0$, we denote $B(E,\delta):=\bigcup_{y\in E}B(y,\delta)$. The unit sphere is denoted $\mathbb{S}^{n-1}$. 

Given a Radon measure $\mu$ and a (possibly vector-valued) function $f$, we denote by $f\mu$ the measure $$f\mu(A):=\int_Af d\mu,\quad \mbox{for every Borel set $A$}.$$
For a measure $\mu$ and a Borel set $E$ we denote by $\mu\trace E$ the restriction of $\mu$ to $E$, namely the measure defined by 
$$\mu\trace E(A):=\mu (A\cap E), \quad \mbox{for every Borel set $A$}.$$
The support of a positive Radon measure $\mu$, denoted $\supp(\mu)$, is the intersection of every closed sets $C$ such that $\mu(\R^n\setminus C)=0$. 
For $0\leq k\leq n$, the symbol $\Haus^k$ denotes the $k$-dimensional Hausdorff measure on $\R^n$.

\begin{definizione}[Rectifiable sets and measures]
For $0\leq k\leq n$, a set $E\subset\R^n$ is $k$\emph{-rectifiable} if there are sets $E_i$ ($i=1,2,\ldots$) such that
\begin{itemize}
    \item [(i)] $E_i$ is a Lipschitz image of $\R^k$ for every $i$;
    \item [(ii)] $\Haus^k(E\setminus\bigcup_{i\geq 1}E_i)=0$. 
\end{itemize}
A Radon measure is said to be $k$\emph{-rectifiable} if  it is absolutely continuous with respect to $\Haus^k\trace E$, for some $k$-rectifiable set $E$. 
\end{definizione}

AS usual, the symbol $\Gr(k,n)$ denotes the Grassmannian of $k$-planes in $\R^n$, and we define $\Gr:=\bigcup_{0\leq k \leq n}\Gr(k,n)$. We endow $\Gr$ with the topology induced by the distance
$$d(V,W):=d_\Haus(V\cap U(0,1), W\cap U(0,1)),$$
where $d_\Haus$ is the Hausdorff distance. We recall the following definition, see \cite[\S 2.6, \S 6.1 and Theorem 6.4]{AlbMar}. 

\begin{definizione}[Decomposability bundle]
Given a positive Radon measure $\mu$ on $\R^n$ its \emph{decomposability bundle} is a map $V(\mu,\cdot)$ taking values in the set $\Gr$ defined as follows. A vector $v\in\R^n$ belongs to $V(\mu,x)$ if and only if there exists a vector-valued measure $T$ with ${\mathrm {div}} T=0$ such that
$$\lim_{r\to 0}\frac{\Mass((T-v\mu)\trace B(x,r))}{\mu(B(x,r))}=0,$$
where $\Mass((T-v\mu)\trace B(x,r))$ denotes the total variation of the vector-valued measure $(T-v\mu)\trace B(x,r)$.
\end{definizione}

\begin{definizione}[Tangent measures]
 We define the map $T_{x,r}(y)=\tfrac{y-x}{r}$, and we denote by $T_{x,r}\mu$ the pushforward of $\mu$ under $T_{x,r}$, namely $T_{x,r}\mu(A):=\mu(x+rA)$ for every Borel set $A$. Given a measure $\mu$ and a point $x$, the family of \textit{tangent measures}  $\Tan(\mu,x)$, introduced in \cite{Preiss1987GeometryDensities}, consists of all the possible non-zero limits (with respect to the weak* convergence of measures) of $c_iT_{x,r_i}\mu$, for some sequence of positive real numbers $c_i$ and some sequence of radii $r_i\to 0$. We know thanks to \cite[Theorem 2.5]{Preiss1987GeometryDensities} that $\Tan(\mu,x)$ is non-empty $\mu$-almost everywhere.
\end{definizione}

\begin{definizione}[Cone over a $k$-plane]
For any $k\in\{1,\ldots,n-1\}$, $0<\alpha<1$, $x\in\R^n$ and $V\in\Gr(k,n)$ we let:
$$X(x,V,\alpha):=x+\{v\in\R^n:|p_V(v)|\geq\alpha|v|\},$$
where $p_V$ denotes the orthogonal projection onto $V$.
For notation convenience, for $k=0$ and for every $0<\alpha<1$, we define $X(x,0,\alpha):=\{x\}$.
% {\color{red} (controllare nel resto della dimostrazione cosa facciamo nel caso $m=0,n$. Probabilmente per $m=0$ ci conviene definirlo comunque come $\{x\}$)}
\end{definizione}

\begin{definizione}[Distance $F_K$ between measures]\label{def:Fk}
Given $\phi$ and $\psi$ two Radon measures on $\R^n$, and given $K\subseteq \R^n$ a compact set, we define 
\begin{equation}
    F_K(\phi,\psi):= \sup\left\{\left|\int fd\phi - \int fd\psi\right|:f\in \mathrm{Lip}_1^+(K)\right\},
    \label{eq:F}
\end{equation}
where $\mathrm{Lip}_1^+(K)$ denotes the class of $1$-Lipschitz nonnegative function with support contained in $K$. We also write $F_{x,r}$ for $F_{B(x,r)}$.
% In addition, for a given Radon measure $\phi$ on $\R^n$ and for $r>0$, we let $F_r(\phi):=\int \text{dist}(z,\R^n\setminus U(0,r))d\phi(z)$. 
\end{definizione}

\begin{lemma}\label{lemma:coni}
Let $\mu$ be a Radon measure on $\R^n$ with ${\dim}(V(\mu,x))=k<n$, for $\mu$-almost every $x$. Assume that $\mu(R)=0$ for every $k$-rectifiable set $R$. Then for $\mu$-almost every $x$ there exists $\alpha>0$ such that for every $\varepsilon>0$ 
\begin{equation}\label{tantipunti}
{\supp}(\mu)\cap B(x,\varepsilon)\setminus    X(x,V(\mu,x),\alpha)\neq\emptyset.
\end{equation} 
\end{lemma}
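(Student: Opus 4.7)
My plan is to argue by contradiction. Suppose the conclusion fails on a Borel set $E_0\subset\R^n$ with $\mu(E_0)>0$: at every $x\in E_0$, for every $\alpha>0$ there is some $\varepsilon>0$ with $\supp(\mu)\cap B(x,\varepsilon)\subseteq X(x,V(\mu,x),\alpha)$. The goal is then to exhibit a positive-$\mu$-measure $k$-rectifiable subset of $E_0$, contradicting the standing hypothesis. The case $k=0$ is immediate: failure of \eqref{tantipunti} at $x$ forces $x$ to be isolated in $\supp(\mu)$, hence an atom, and the set of atoms is at most countable, so $0$-rectifiable and $\mu$-null by assumption. So from now on I assume $k\geq 1$.

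Fix a small $\alpha_0>0$ and consider the sets
\[A_m:=\{x\in E_0:\supp(\mu)\cap B(x,1/m)\subseteq X(x,V(\mu,x),\alpha_0)\}.\]
Since $\supp(\mu)$ is closed and $x\mapsto V(\mu,x)$ is Borel, the complement of $A_m$ is the projection onto the first factor of a Borel set in $\R^n\times\R^n$, so each $A_m$ is $\mu$-measurable. From $E_0=\bigcup_m A_m$ and $\mu(E_0)>0$ I extract a Borel subset $E_1\subset E_0$ of positive $\mu$-measure and a radius $\varepsilon_0>0$ with
\[\supp(\mu)\cap B(x,\varepsilon_0)\subseteq X(x,V(\mu,x),\alpha_0)\qquad\text{for every }x\in E_1.\]
Applying Lusin's theorem to the Borel map $x\mapsto V(\mu,x)\in\Gr(k,n)$, I extract a compact $E_2\subset E_1$ with $\mu(E_2)>0$ on which $V(\mu,\cdot)$ is uniformly continuous.

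Now I pick $\rho\in(0,\varepsilon_0/2)$ so small that $\|p_{V(\mu,x)}-p_{V(\mu,y)}\|<\alpha_0/2$ whenever $x,y\in E_2$ satisfy $|x-y|<2\rho$. Fix any $x_0\in E_2$ and set $F:=E_2\cap B(x_0,\rho)$. For $x,y\in F$ we have $|y-x|<2\rho<\varepsilon_0$, so $y\in\supp(\mu)\cap B(x,\varepsilon_0)\subseteq X(x,V(\mu,x),\alpha_0)$, that is $|p_{V(\mu,x)}(y-x)|\geq\alpha_0|y-x|$. The projection estimate upgrades this to
\[|p_{V(\mu,x_0)}(y-x)|\geq|p_{V(\mu,x)}(y-x)|-\tfrac{\alpha_0}{2}|y-x|\geq\tfrac{\alpha_0}{2}|y-x|.\]
Thus $F$ satisfies the cone condition with respect to the \emph{fixed} $k$-plane $V(\mu,x_0)$: the orthogonal projection $p_{V(\mu,x_0)}$ restricted to $F$ is injective with inverse of Lipschitz constant $\sqrt{1-(\alpha_0/2)^{2}}/(\alpha_0/2)$. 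Hence $F$ is contained in a $k$-dimensional Lipschitz graph over $V(\mu,x_0)$. Covering the compact set $E_2$ by countably many such balls, I conclude that $E_2$ is $k$-rectifiable; the standing hypothesis then forces $\mu(E_2)=0$, contradicting $\mu(E_2)>0$.

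The main subtlety lies in the first reduction: the pointwise data come with quantifiers $\forall\alpha\,\exists\varepsilon(x,\alpha)$ that depend on $x$ only in a measurable (not continuous) fashion, and one needs a genuine measure-theoretic argument, of analytic-sets / universal-measurability flavour, to pass to a positive-measure subset on which $\varepsilon$ is bounded below by a uniform $\varepsilon_0$. Once this is done and $V(\mu,\cdot)$ is made continuous via Lusin, the passage from the pointwise cones $X(x,V(\mu,x),\alpha_0)$ to a single reference cone over $V(\mu,x_0)$ on small neighborhoods, and the subsequent Lipschitz-graph extraction, are straightforward applications of the classical cone criterion for rectifiability.
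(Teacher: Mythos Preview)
Your proof is correct and takes a genuinely different, more elementary route than the paper's. The paper argues via tangent measures: it shows that at every bad point each $\nu\in\Tan(\mu,x)$ is supported in $V(\mu,x)$, then invokes \cite[Proposition 2.9]{delnin2021endpoint} to identify $\nu$ with $c\,\Haus^k\trace V(\mu,x)$, and finally appeals to Preiss's rectifiability criterion \cite[\S 4.4(5)]{Preiss1987GeometryDensities} together with compactness of the Grassmannian to produce a $k$-rectifiable set of positive $\mu$-measure. You bypass all of this by using the cone condition directly: after uniformising $\varepsilon$ on a positive-measure piece and making $V(\mu,\cdot)$ continuous via Lusin, you freeze the plane at a reference point $x_0$ and read off the classical Lipschitz-graph criterion. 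Your argument is self-contained and avoids the external machinery of tangent measures and Preiss theory; the paper's approach, while heavier for this lemma, situates it within the broader blow-up framework used elsewhere. Two minor points worth making explicit in a final write-up: ensure $E_2\subset\supp(\mu)$ (harmless since $\mu(\R^n\setminus\supp(\mu))=0$), and note that the centres of the covering balls must lie in $E_2$ so that the reference planes $V(\mu,x_0)$ are available---both are implicit in your argument.
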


\begin{proof}
Assume by contradiction that there exists a Borel set $E$ with $\mu(E)>0$ such that for every $x\in E$ and for every $\alpha>0$ there exists $\varepsilon>0$ such that \eqref{tantipunti} fails. We claim that this implies that for $\mu$-almost every $x\in E$ and for every $\alpha>0$ every tangent measure $\nu\in \Tan(\mu,x)$ satisfies
\begin{equation}\label{blowupinconi}
    \supp(\nu)\subset  X(0,V(\mu,x),\alpha)
\end{equation} and therefore  $\supp(\nu)\subset \bigcap_{\alpha>0}X(0,V(\mu,x),\alpha)=V(\mu,x)$. 
In order to prove \eqref{blowupinconi}, fix $x\in E$ such that $\Tan(\mu,x)$ is non empty and consider any open ball $U(y,\rho)\subset \R^n\setminus X(0,V(\mu,x),\alpha)$ and notice that since \eqref{tantipunti} fails, we have $T_{x,r}\mu(U(y,\rho))=\mu(U(x+ry,r\rho))=0$ for every $r<\varepsilon/(|y|+\rho)$ which concludes in view of \cite[Proposition 2.7]{DeLellis2008RectifiableMeasures}. Thanks to \cite[Proposition 2.9]{delnin2021endpoint} we infer in particular that $\nu=c\Haus^k\trace V(\mu,x)$ for some $c>0$.
For every $W\in\Gr(k,n)$ denote
$$E_W:=\{x\in\R^n:(k+1)F_{0,1}(\Haus^k\trace V(\mu,x),\Haus^k\trace W)<20^{-k-4}\}.$$
By the compactness of the Grassmannian, there exists $W\in\Gr(k,n)$ such that $\mu(E_W)>0$. On the other hand, by \cite[\S 4.4(5)]{Preiss1987GeometryDensities} and by the locality of tangent measures, see \cite[\S 2.3(4)]{Preiss1987GeometryDensities}, we conclude that $\mu\trace E_W$ is supported on a $k$-rectifiable set. This however contradicts the assumption that $\mu(R)=0$ for every $k$-rectifiable set $R$.
\end{proof}

\begin{definizione}[Cone-null sets]
 For any $e\in\mathbb{S}^{n-1}$ and $\alpha\in(0,1)$ we let the \emph{one-sided cone of axis $e$ and amplitude $\alpha$} the set
$$C(e,\alpha):=\{v\in\R^n:\langle v,e\rangle\geq\alpha|v|\}.$$
In the following we denote by $\Gamma(e,\alpha)$ the family of Lipschitz curves $\gamma:E\subseteq \R\to\R^n$ such that $\gamma'(t)\in C(e,\alpha)$ for $\mathcal{L}^1$-almost every $t\in E$. 
Finally, a Borel set $B$ is said to be $C(e,\alpha)$-null if $\Haus^1(\mathrm{im}(\gamma)\cap B)=0$ for any $\gamma\in \Gamma(e,\alpha)$.
\end{definizione}

\begin{proposizione}\label{alb}
Let $E$ be a compact set in $\R^n$.
Let $W\in\Gr(k,n)$, with $k<n$ and suppose that there exists $\theta_0\in(0,1)$ such that for any $e\in W^\perp$ the set $E$ is $C(e,\theta_0)$-null. Then, for any $\theta_0\leq \theta<1$  and $\varepsilon>0$ there exists $\delta_0>0$ such that
$$\Haus^1(\im(\gamma)\cap B(E,\delta_0))\leq \varepsilon,$$
for any $\gamma\in\Gamma(e,\theta)$.
For any $\theta_0\leq \theta<1$, $0<\delta<\delta_0$ and any $e\in W^\perp$, consider the function
\begin{equation}
    \omega_{e,\theta,\delta}(x):=\sup_{\gamma\in \Gamma(e,\theta)}\Haus^1(B(E,\delta)\cap \im(\gamma))-\lambda\lvert e\rvert,
    \label{e:width}
\end{equation}
where $\gamma(b)$ is of the form $x+\lambda e$. Then the following properties hold
\begin{itemize}
\item[(i)] $0\leq \omega_{e,\theta,\delta}(x)\leq \varepsilon$ for any $x\in\R^n$,
\item[(ii)]  $\omega_{e,\theta,\delta}(x)\leq \omega_{e,\theta,\delta}(x+se)\leq \omega_{e,\theta,\delta}(x)+s|e|$ for every $s>0$ and any $x\in\R^n$. Moreover, if the segment $[x,x+se]$ is contained in $B(E,\delta)$, then
$\omega_{e,\theta,\delta}(x+se)=\omega_{e,\theta,\delta}(x)+s\lvert e \rvert$,
\item[(iii)] $\lvert \omega_{e,\theta,\delta}(x+v)-\omega_{e,\theta,\delta}(x)\rvert\leq \theta(1-\theta^2)^{-1/2}\lvert v\rvert$ for every $v\in V:=e^{\perp}$,
\item[(iv)]$\omega_{e,\theta,\delta}$ is $1+(n-1)\theta(1-\theta^2)^{-1/2}$-Lipschitz.
\end{itemize}
\end{proposizione}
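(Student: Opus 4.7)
The plan is to prove the preliminary uniform bound by a compactness/contradiction argument and then derive properties (i)--(iv) by direct manipulation of the supremum defining $\omega_{e,\theta,\delta}$.

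For the uniform bound $\Haus^1(\im(\gamma)\cap B(E,\delta_0))\leq \varepsilon$, I would argue by contradiction. If no admissible $\delta_0$ exists, there are sequences $\delta_n\downarrow 0$, $e_n\in W^\perp$ and $\gamma_n\in\Gamma(e_n,\theta)$ with $\Haus^1(\im\gamma_n\cap B(E,\delta_n))>\varepsilon$. Since $E$ is compact, I truncate each $\gamma_n$ to the portion of its image lying in a fixed bounded neighborhood of $E$ (preserving the $\varepsilon$ bound) and reparametrize by arc length, so that all $\gamma_n$ become $1$-Lipschitz on a common compact interval. Compactness of $\mathbb{S}^{n-1}\cap W^\perp$ together with Arzelà--Ascoli yields subsequential limits $e_n\to e\in W^\perp$ and $\gamma_n\to\gamma$ uniformly, and closedness of the cone $C(e,\theta)$ under limits of derivatives ensures $\gamma\in\Gamma(e,\theta)\subset\Gamma(e,\theta_0)$. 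A reverse-Fatou argument on the preimages $A_n:=\gamma_n^{-1}(B(E,\delta_n))$, which satisfy $\Leb^1(A_n)\geq \varepsilon$, gives $\Leb^1(\limsup_n A_n)\geq\varepsilon$ and $\gamma(\limsup_n A_n)\subset E$. Because the $e$-projection of any arc-length parametrized curve in $\Gamma(e,\theta)$ has slope at least $\theta>0$, such curves are injective, and the area formula upgrades this to $\Haus^1(\im\gamma\cap E)\geq\varepsilon$, contradicting the $C(e,\theta_0)$-nullity of $E$.

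Property (i) is immediate: the lower bound $\omega\geq 0$ follows by taking the trivial constant curve at $x$ (giving $\lambda=0$), and the upper bound $\omega\leq\varepsilon$ follows from the uniform estimate combined with $\lambda\geq 0$. For property (ii), $\omega(x)\leq\omega(x+se)$ is obtained by extending any near-optimal candidate $\gamma$ for $\omega(x)$ (ending at $x+\lambda e$) by the $e$-segment $[x+\lambda e,x+(\lambda+s)e]$, producing a candidate for $\omega(x+se)$ with unchanged $\lambda$ and non-decreasing $\Haus^1$ content; the reverse $\omega(x+se)\leq\omega(x)+s|e|$ holds because every candidate for $\omega(x+se)$ with parameter $\lambda'$ is automatically a candidate for $\omega(x)$ with parameter $\lambda=\lambda'+s$ and value exactly $s|e|$ larger. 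For the equality case $[x,x+se]\subset B(E,\delta)$, one uses a near-optimal candidate for $\omega(x)$ ending at $x$ (i.e.\ with $\lambda=0$) and concatenates with the segment $[x,x+se]$, which lies entirely in $B(E,\delta)$ and contributes exactly $s|e|$ to the $\Haus^1$-term.

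Property (iii) exploits the cone geometry: a segment from $(x+v)+\lambda e$ to $x+\lambda' e$ with $v\in V=e^\perp$ has direction $-v+(\lambda'-\lambda)e$, which lies in $C(e,\theta)$ precisely when $(\lambda'-\lambda)|e|\geq\theta|v|/\sqrt{1-\theta^2}$, and the resulting minimal segment has length $|v|/\sqrt{1-\theta^2}$. Appending such a segment to a near-optimal candidate for $\omega(x+v)$ yields a candidate for $\omega(x)$ whose $\Haus^1$-term grows by at most the segment length while the $\lambda|e|$ penalty grows by exactly $\theta|v|/\sqrt{1-\theta^2}$; the symmetric inequality gives the claimed Lipschitz bound on $V$. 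Property (iv) follows by decomposing $w=se+\sum_{i=1}^{n-1}v_ib_i$ in an orthonormal basis consisting of $e/|e|$ and $b_1,\dots,b_{n-1}\in V$, applying (ii) in the $e$-direction and iterating (iii) along each $b_i$, with $\sum_i|v_i|\leq (n-1)|w|$ yielding the stated Lipschitz constant $1+(n-1)\theta(1-\theta^2)^{-1/2}$. The main obstacle I anticipate is the compactness step above: ensuring the cone constraint passes to the limit under uniform convergence of arc-length parametrizations, and upgrading the $\Leb^1$ lower bound on preimages (from reverse Fatou) to a $\Haus^1$ lower bound on images intersected with $E$, which relies crucially on injectivity of arc-length curves in $\Gamma(e,\theta)$ for $\theta>0$.
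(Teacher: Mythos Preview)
Your proposal is essentially correct and constitutes a self-contained argument. Note, however, that the paper does not actually prove this proposition: its entire proof consists of a citation to \cite[Proposition~4.12]{AlbMar}, recording that the uniform $\Haus^1$ bound is Step~1 there and that the construction of $\omega_{e,\theta,\delta}$ together with properties (i)--(iv) is Step~2. What you have written is a reconstruction of (presumably) that cited proof, and your manipulations of the supremum for (i)--(iv) are the standard ones.

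One imprecision worth flagging in your compactness step: the uniform limit $\gamma$ of arc-length parametrizations $\gamma_n$ need not itself be arc-length parametrized, so the area-formula upgrade you invoke does not directly give $\Haus^1(\im\gamma\cap E)\geq\varepsilon$. What survives in the limit is the bi-Lipschitz lower bound: since each $\gamma_n$ satisfies $\langle\gamma_n(t)-\gamma_n(s),e_n/|e_n|\rangle\geq\theta|t-s|$, the limit curve satisfies $|\gamma(t)-\gamma(s)|\geq\theta|t-s|$, whence $\Haus^1(\gamma(\limsup_n A_n))\geq\theta\,\Leb^1(\limsup_n A_n)\geq\theta\varepsilon>0$. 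This is still enough to contradict $C(e,\theta_0)$-nullity, so the gap is cosmetic rather than substantive. A second minor point: in your argument for (iii), the relevant estimate is that the $\Haus^1$-term of the extended curve is \emph{at least} that of the original (appending a segment cannot decrease it), not that it grows by at most the segment length; it is this lower bound, combined with the exact growth of the $\lambda|e|$ penalty, that yields $\omega(x)\geq\omega(x+v)-\theta(1-\theta^2)^{-1/2}|v|$.
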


\begin{proof}
The first part of the proposition is an immediate consequence of \emph{Step 1} in the proof of  \cite[Proposition 4.12]{AlbMar}.
On the other hand, the construction of the function $\omega_{e,\theta,\delta}$ was performed in the second step of the proof of \cite[Proposition 4.12]{AlbMar}.
\end{proof}

\section{Construction of non-differentiable functions}
Throughout this section we fix $k\in \{0,\ldots,n-1\}$ and let $\mu$ be a Radon measure such that $\dim(V(\mu,x))=k$ for $\mu$-almost every $x\in\R^n$ and that $\mu(R)=0$ for any $k$-rectifiable set $R$.
Thanks to the strong locality principle, see \cite[Proposition 2.9 (i)]{AlbMar}, and Lusin's Theorem we can assume, up to restriction to a compact subset $\tilde K\subset\supp(\mu)$ of positive $\mu$-measure, that $V(\mu,x)$ is uniformly continuous on $\tilde K$.

\medskip

The aim of this section is to prove the following

\begin{proposizione}\label{coroll}
Let $\mu$ and $\tilde K$ be as above. There exists a Lipschitz function $f:\R^n\to \R$ and a Borel set $E\subseteq \tilde{K}$ of positive $\mu$-measure such that for $\mu$-almost every $x\in E$ there exists a direction $v\not\in V(\mu,x)$ and a sequence of points $y_i=y_i(x)\in \tilde K$ such that
    $$\frac{y_i-x}{\lvert y_i-x\rvert}\to v\qquad \text{and}\qquad \limsup_{i\to \infty}\frac{f(y_i)-f(x)}{\lvert y_i-x\rvert}-\liminf_{i\to \infty}\frac{f(y_i)-f(x)}{\lvert y_i-x\rvert}>0.$$
\end{proposizione}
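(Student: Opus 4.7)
The plan is to adapt the construction of non-differentiable Lipschitz functions from \cite[Proposition 4.12]{AlbMar}, upgraded via Lemma \ref{lemma:coni} so that the incremental ratios can be tested along a sequence of points in $\tilde K$ rather than along a straight line.

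First, I would restrict to a compact $K\subseteq \tilde K$ of positive $\mu$-measure on which $V(\mu,\cdot)$ takes values uniformly close to a fixed $W\in\Gr(k,n)$, by combining the uniform continuity of $V(\mu,\cdot)$ on $\tilde K$ with the compactness of the Grassmannian. Applying Lemma \ref{lemma:coni} to $\mu\trace\tilde K$ (which has the same decomposability bundle as $\mu$ by the locality principle \cite[Proposition 2.9]{AlbMar}, and still assigns zero mass to every $k$-rectifiable set) produces, for $\mu$-a.e.\ $x\in K$, a uniform $\alpha>0$ with $\tilde K\cap B(x,\varepsilon)\setminus X(x,V(\mu,x),\alpha)\neq\emptyset$ for every $\varepsilon>0$. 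A measurable selection on the sphere then yields a Borel set $E\subseteq K$ of positive $\mu$-measure and a unit vector $e\in W^\perp$ with the property that, for each $x\in E$, the escape directions $(y-x)/|y-x|$ may be chosen inside a fixed small neighborhood $U$ of $e$ disjoint from $V(\mu,x)$.

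Next, using the characterization of the decomposability bundle via cone-null sets established in \cite{AlbMar}, I would verify that $K$ is $C(e',\theta_0)$-null for every $e'\in W^\perp$ and some fixed $\theta_0\in(0,1)$, which makes Proposition \ref{alb} applicable to $K$. The resulting functions $\omega_{e,\theta,\delta}$, satisfying the growth properties (i)--(iv), are then combined as in the proof of \cite[Proposition 4.12]{AlbMar} into a weighted sum
$$f := \sum_{j\ge 1} c_j\, \omega_{e,\theta_j,\delta_j},$$
with $\delta_j\downarrow 0$, $\theta_j\uparrow 1$ and coefficients $c_j$ tuned so that $f$ is globally Lipschitz while, at $\mu$-a.e.\ $x\in E$ and every $v\in U$, the incremental ratios $(f(x+tv)-f(x))/t$ have a strictly positive gap between $\limsup_{t\to 0^+}$ and $\liminf_{t\to 0^+}$, uniformly in $x$.

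Finally, for $\mu$-a.e.\ $x\in E$ I would invoke Lemma \ref{lemma:coni} to pick a sequence $y_i\in\tilde K$ with $|y_i-x|\to 0$ and $v_i:=(y_i-x)/|y_i-x|\to v\in U$, choosing the scales $|y_i-x|$ by a diagonal procedure so as to realize both the $\limsup$ and $\liminf$ regimes of $f$ along direction $v$; the Lipschitz estimate
$$\left|\frac{f(y_i)-f(x)}{|y_i-x|}-\frac{f(x+|y_i-x|v)-f(x)}{|y_i-x|}\right|\le \Lip(f)\cdot|v_i-v|\longrightarrow 0$$
then transfers the line-based oscillation to the sequence $y_i$, yielding the required positive gap between $\limsup_i$ and $\liminf_i$. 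The main obstacle lies in this last step: Lemma \ref{lemma:coni} guarantees only the existence of points of $\tilde K$ in arbitrarily small balls around $x$, with no a priori control on the scales $|y_i-x|$, so the parameters $c_j,\theta_j,\delta_j$ of $f$ must be arranged so that the oscillation of its directional incremental ratios occurs at sufficiently dense scales for every sequence of scales produced by Lemma \ref{lemma:coni} to hit both oscillation regimes.
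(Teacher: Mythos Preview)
Your outline diverges from the paper's argument at a crucial point, and the divergence is exactly the ``main obstacle'' you flag but do not resolve. The paper does \emph{not} build $f$ as an explicit sum $\sum_j c_j\,\omega_{e,\theta_j,\delta_j}$ and then try to match the oscillation scales of $f$ against whatever scales happen to be produced by Lemma~\ref{lemma:coni}. Instead it runs a Baire category argument in a complete metric space $X_\beta$ of Lipschitz functions: one defines, for each scale $\sigma$, functionals
\[
U^\pm_{0,\sigma}f=\int_K T^\pm_{0,\sigma}f\,d\mu,
\]
where the inner $\sup/\inf$ defining $T^\pm_{0,\sigma}f(x)$ is taken only over increments $x+v$ with $x+v\in\supp(\mu)\setminus X(x,V(\mu,x),\sqrt{1-\alpha^2})$. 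These are shown to be Baire class~$1$, and the $\omega$-functions of Proposition~\ref{alb} enter only as local perturbations (glued by a partition of unity over a Besicovitch cover) proving that every continuity point of $U^\pm_{0,\sigma}$ already satisfies $\pm U^\pm_{0,\sigma}\ge (\alpha/16n)\mu(K)$. A generic $f\in X_\beta$ then lies in the residual intersection $\bigcap_{\sigma\in\mathbb Q\cap(0,1)}(\mathcal L_+(\sigma)\cap\mathcal L_-(\sigma))$, from which the set $E$ and the oscillating sequence $y_i\in\supp(\mu)$ are extracted directly.

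The reason your transfer step is genuinely problematic is this: Lemma~\ref{lemma:coni} gives you, for each $x$, \emph{some} sequence $y_i\in\tilde K$ with $y_i\to x$ along a direction $v\notin V(\mu,x)$, but no control whatsoever on the set of available radii $\{|y_i-x|\}$. Your Lipschitz comparison only lets you replace $(f(y_i)-f(x))/|y_i-x|$ by $(f(x+|y_i-x|v)-f(x))/|y_i-x|$, i.e.\ you are sampling the line-based incremental ratio along a \emph{fixed} subsequence $t_i=|y_i-x|$ dictated by $\tilde K$, not along all $t\to 0$. A function with $\limsup_{t\to 0}\neq\liminf_{t\to 0}$ along the line may very well have its incremental ratios converge along this particular $t_i$. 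Arranging the $c_j,\theta_j,\delta_j$ so that the oscillation is ``dense in scale'' uniformly in $x\in E$ is not something the $\omega$-functions of Proposition~\ref{alb} deliver: they force derivative $1$ along $e$ on $B(\tilde A,\delta)$ and only the weak bound $0\le\partial_e\omega\le 1$ elsewhere, so a global sum $\sum c_j\omega_j$ does not produce controlled two-sided oscillation on every dyadic band. The paper's device of hard-wiring $\supp(\mu)$ into the very definition of $T^\pm_{0,\sigma}$ is precisely what eliminates this mismatch between the scales of $f$ and the scales of $\tilde K$.
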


Thanks to Lemma \ref{lemma:coni} we know that there exists $0<\alpha_0<1/\sqrt{n}$ such that for any $0<\alpha<\alpha_0$ we can find a compact subset $K_\alpha$ of $\tilde K$ with positive measure where
\begin{equation}\label{tantipunti2}
{\supp}(\mu)\cap B(x,r)\setminus    X(x,V(\mu,x),\sqrt{1-\alpha^2})\neq\emptyset\qquad\text{for any $r>0$ and every $x\in K_\alpha$}.
\end{equation}

\begin{lemma}\label{lemma:campo}
Let $\mu$ and $(K_\alpha)_{0<\alpha<\alpha_0}$ be as above. For any $0<\alpha<\alpha_0$ we can find a compact set $K\subseteq K_\alpha$ of positive $\mu$-measure and a continuous vector field $e:\R^n\to\mathbb{S}^{n-1}$ such that $e(x)$ is orthogonal to $V(\mu,x)$ at $\mu$-almost every $x\in\R^n$ and such that:
\begin{equation}\label{e:conipieni}
\supp(\mu)\cap B(x,r)\cap C(e(x),(n-k)^{-1}\alpha)\setminus    X(x,V(\mu,x),\sqrt{1-\alpha^2})\neq \emptyset \qquad \text{for any $r>0$ and for every $x\in K$}.
\end{equation}
\end{lemma}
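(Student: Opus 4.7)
I propose to build $e$ from a continuous local orthonormal frame of $V(\mu,\cdot)^\perp$ on a small ball, selecting one of finitely many sign/index combinations via pigeonhole so that the resulting vector field satisfies \eqref{e:conipieni} on a positive $\mu$-measure subset of $K_\alpha$. Concretely, using the uniform continuity of $V(\mu,\cdot)$ on $\tilde K$, I first pick $\delta>0$ and $x_0\in K_\alpha$ with $\mu(K_\alpha\cap B(x_0,\delta))>0$ so small that, starting from any orthonormal basis $\tilde f_1,\dots,\tilde f_{n-k}$ of $V(\mu,x_0)^\perp$, the Gram--Schmidt orthonormalization of the projected vectors $p_{V(\mu,x)^\perp}\tilde f_1,\dots,p_{V(\mu,x)^\perp}\tilde f_{n-k}$ is well-defined and yields a continuous orthonormal frame $f_1(x),\dots,f_{n-k}(x)$ of $V(\mu,x)^\perp$ on $\tilde K\cap B(x_0,\delta)$ that stays close to $\tilde f_1,\dots,\tilde f_{n-k}$.

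Next, for each pair $(i,\sigma)\in\{1,\dots,n-k\}\times\{-1,+1\}$ I consider the set
\[
A_{i,\sigma}:=\bigcap_{m=1}^{\infty}\Big\{x\in K_\alpha\cap B(x_0,\delta):\ \supp(\mu)\cap B(x,\tfrac{1}{m})\cap C(\sigma f_i(x),(n-k)^{-1}\alpha)\setminus X(x,V(\mu,x),\sqrt{1-\alpha^2})\neq\emptyset\Big\},
\]
where, consistently with \eqref{e:conipieni}, the cone $C(\sigma f_i(x),(n-k)^{-1}\alpha)$ is understood to be translated to $x$. The key claim is $K_\alpha\cap B(x_0,\delta)\subseteq\bigcup_{(i,\sigma)}A_{i,\sigma}$: fix $x$ in the intersection and $m\in\N$; \eqref{tantipunti2} with $r=1/m$ yields $y_m\in\supp(\mu)\cap B(x,1/m)$ with $y_m\notin X(x,V(\mu,x),\sqrt{1-\alpha^2})$, whence $|p_{V(\mu,x)^\perp}(y_m-x)|>\alpha|y_m-x|$; expanding this projection in the frame $f_1(x),\dots,f_{n-k}(x)$ and pigeonholing on the squared coefficients, some pair $(i_m,\sigma_m)$ satisfies $\langle y_m-x,\sigma_m f_{i_m}(x)\rangle\geq(\alpha/\sqrt{n-k})|y_m-x|\geq(\alpha/(n-k))|y_m-x|$, i.e.\ $y_m-x\in C(\sigma_m f_{i_m}(x),(n-k)^{-1}\alpha)$. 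Since only $2(n-k)$ pairs can occur, some fixed $(i_0,\sigma_0)$ is selected for arbitrarily large $m$; the nesting of the balls $B(x,1/m)$ then promotes this pair to a witness at \emph{every} scale, so $x\in A_{i_0,\sigma_0}$. Consequently some $A_{i_0,\sigma_0}$ has positive $\mu$-measure (the sets are analytic, hence universally measurable), and I pick a compact subset $K\subseteq A_{i_0,\sigma_0}$ of positive $\mu$-measure.

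Finally, I set $e(x):=\sigma_0 f_{i_0}(x)$ on $\tilde K\cap B(x_0,\delta)$ and extend to a continuous $\mathbb{S}^{n-1}$-valued map on $\R^n$ by a standard Tietze-type argument: for $\delta$ small the values of $e$ all lie in a small spherical cap around $\sigma_0\tilde f_{i_0}$, so one extends componentwise into a closed convex subset of $\R^n\setminus\{0\}$ contained in that cap and then normalizes. The resulting $e$ is continuous, satisfies $e(x)\in V(\mu,x)^\perp$ on $K$ (hence $\mu$-almost everywhere after restricting $\mu$ to $K$), and \eqref{e:conipieni} holds on $K$ by the very construction of $A_{i_0,\sigma_0}$. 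The main obstacle I expect lies in the double pigeonhole above: the pair $(i_m,\sigma_m)$ a priori depends on both $x$ and $m$, and one needs a single pair that witnesses every scale simultaneously; finiteness of the pair set combined with monotonicity of the balls $B(x,1/m)$ is exactly what makes this possible.
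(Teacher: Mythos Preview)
Your proof is correct and follows the same approach as the paper: build a continuous orthonormal frame of $V(\mu,\cdot)^\perp$, use that the $2(n-k)$ one-sided cones $C(\pm f_i(x),(n-k)^{-1}\alpha)$ cover $\R^n\setminus X(0,V(\mu,x),\sqrt{1-\alpha^2})$, and pigeonhole over the finitely many sign/index pairs to find one that witnesses \eqref{e:conipieni} at all scales on a set of positive $\mu$-measure. You are simply more explicit than the paper about the localization needed for the Gram--Schmidt frame, the double pigeonhole over scales and directions, and the Tietze-type extension of $e$ to all of $\R^n$.
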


\begin{proof}
Thanks to the continuity of $V(\mu,\cdot)^\perp$ we can find $n-k$ continuous vector fields  $e_{k+1},\ldots,e_{n}:\R^n\to\mathbb{S}^{n-1}$ such that 
$$V(\mu,x)^\perp=\mathrm{span}\{e_{k+1}(x),\ldots,e_{n}(x)\},$$
for every $x\in K_\alpha$. Since the cones $$C(e_{k+1}(x),(n-k)^{-1}\alpha),\ldots,C(e_{n}(x),(n-k)^{-1}\alpha),C(-e_{k+1}(x),(n-k)^{-1}\alpha),\ldots,C(-e_{n}(x),(n-k)^{-1}\alpha),$$ cover $\R^n \setminus X(0,V(\mu,x),\sqrt{1-\alpha^2})$ for every $x\in K_\alpha$, there exists one vector field, that we denote $e$, among the $e_{k+1},\ldots,e_{n},-e_{k+1},\ldots,-e_{n}$ for which the set of those $x\in K_\alpha$ where
\eqref{e:conipieni} holds has positive $\mu$-measure. 
\end{proof}

\begin{definizione}
Throughout the rest of this section we will let $\alpha_0$ be as in \eqref{tantipunti2} and we fix $0<\alpha<\alpha_0$. We also fix the compact set $K$ and the continuous vector field $e:\R^n\to\mathbb{S}^{n-1}$ yielded by Lemma \ref{lemma:campo}. We let $e_{1},\ldots,e_k:\R^n\to\mathbb{S}^{n-1}$ be continuous orthonormal vector fields spanning $V(\mu,x)$ at every $x\in K$ and we complete $\{e_1,\ldots,e_k,e\}$ to a basis of $\R^n$ of orthonormal continuous vector fields that we denote by $\{e_1,\ldots,e_k,e,e_{k+1},\ldots,e_{n-1}\}$.

Fix a ball $B(0,r)$ such that $K\subset B(0,r-1)$ and for any $\beta\in(0,1)$ we denote by $X_\beta$ the family of Lipschitz functions $f:B(0,r)\to\R$ such that 
\begin{equation}
    \lvert D_{e}f(x)\rvert\leq 1 \qquad \text{and}\qquad \lvert D_{e_j}f(x)\rvert\leq \beta\quad\text{for any $j=1,\ldots,n-1$},
    \label{condizioniXb}
\end{equation}
for $\Leb^n$-almost every $x\in\R^n$.
We metrize $X_\beta$ with the supremum norm and we note that this make $X_\beta$ a complete and separable metric space. Note also that $X_\beta$ is non-trivial as it contains all the $\beta$-Lipschitz functions. 
\end{definizione}

\begin{definizione}
For any $\beta>0$ and any $0\leq\sigma'<\sigma<1$ we can define on $X_\beta$ the functionals
\begin{align*}
T_{\sigma',\sigma}^+f:=\max\Big\{ \sup\Big\{\frac{f(x+v)-f(x)}{\lvert v\rvert}:\sigma'<\lvert v\rvert\leq \sigma\text{ and } x+v\in \supp(\mu)\setminus X(x,V(\mu,x),\sqrt{1-\alpha^2})\Big\},-n\Big\}\\
T_{\sigma',\sigma}^-f:=\min\Big\{ \inf\Big\{\frac{f(x+v)-f(x)}{\lvert v\rvert}:\sigma'<\lvert v\rvert\leq \sigma\text{ and } x+v\in \supp(\mu)\setminus X(x,V(\mu,x),\sqrt{1-\alpha^2})\Big\},n\Big\}.
\end{align*}
\end{definizione}

\begin{proposizione}
For any $0\leq \sigma'<\sigma<1$ the functionals
$$U_{\sigma',\sigma}^\pm f:=\int_K T_{\sigma',\sigma}^\pm f(z)d\mu(z),$$
are Baire class 1 on $X_\beta$. 
\end{proposizione}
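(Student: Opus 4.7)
The strategy is to verify that $U^+_{\sigma',\sigma}$ is lower semicontinuous and $U^-_{\sigma',\sigma}$ is upper semicontinuous on $X_\beta$, which suffices because a real-valued semicontinuous function on a metric space is automatically of Baire class $1$: in the lower semicontinuous case it is the pointwise increasing limit of the $m$-Lipschitz regularisations $g_m(y):=\inf_{y'}(g(y')+m\,d(y,y'))$, and dually for the upper semicontinuous case.

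First I would establish the pointwise semicontinuity in $f$ of the integrand. Fix $z\in K$ and any admissible $v$, i.e.\ one with $\sigma'<|v|\leq\sigma$ and $z+v\in\supp(\mu)\setminus X(z,V(\mu,z),\sqrt{1-\alpha^2})$; the admissible set depends on $z$ but not on $f$. Then the map $X_\beta\ni f\mapsto (f(z+v)-f(z))/|v|$ is $(2/|v|)$-Lipschitz for the sup norm, being a two-point evaluation. Consequently $T^+_{\sigma',\sigma}f(z)$ is the maximum of $-n$ with a supremum of continuous functions of $f$, hence $f\mapsto T^+_{\sigma',\sigma}f(z)$ is lower semicontinuous on $X_\beta$; symmetrically $f\mapsto T^-_{\sigma',\sigma}f(z)$ is upper semicontinuous.

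Next I would exploit the uniform bound built into $X_\beta$: condition \eqref{condizioniXb} forces $|\nabla f|^2\leq 1+(n-1)\beta^2\leq n$ almost everywhere, so every $f\in X_\beta$ is $\sqrt{n}$-Lipschitz, and therefore $T^\pm_{\sigma',\sigma}f(z)\in[-n,n]$. Given $f_i\to f$ in $X_\beta$, Fatou's lemma applied to the nonnegative bounded integrands $T^+_{\sigma',\sigma}f_i+n$, which is legal since $\mu(K)<\infty$, yields
\[
U^+_{\sigma',\sigma}(f)+n\mu(K) \;\leq\; \int_K \liminf_i\bigl(T^+_{\sigma',\sigma}f_i(z)+n\bigr)\,d\mu(z) \;\leq\; \liminf_i U^+_{\sigma',\sigma}(f_i)+n\mu(K),
\]
so $U^+_{\sigma',\sigma}$ is lower semicontinuous on $X_\beta$; the analogous reverse-Fatou argument applied to the nonnegative functions $n-T^-_{\sigma',\sigma}f_i$ gives the upper semicontinuity of $U^-_{\sigma',\sigma}$. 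The approximation principle recalled in the first paragraph then delivers the claimed Baire class $1$ property.

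The only subtlety I foresee, and the one I would dispatch at the outset, is the $\mu$-measurability of $z\mapsto T^\pm_{\sigma',\sigma}f(z)$ needed for the defining integrals to be well posed. This is handled by noting that the admissible set of $v$'s at each $z$ is the intersection of the closed set $\supp(\mu)-z$ with the open cone $\{v:|p_{V(\mu,z)}v|<\sqrt{1-\alpha^2}|v|\}$ and a spherical shell; these sets vary Borel-measurably in $z$ because $V(\mu,\cdot)$ is continuous on $\tilde K$, and the incremental ratio is jointly continuous in $(v,z)$, so a standard Borel-selection/countable-dense-family argument expresses $T^\pm_{\sigma',\sigma}f(z)$ as a Borel function of $z$.
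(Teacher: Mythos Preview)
Your proof is correct and takes a genuinely different route from the paper. The paper splits into two cases: for $\sigma'>0$ it observes that all admissible $v$ satisfy $|v|>\sigma'$, so the two-point evaluation $f\mapsto (f(z+v)-f(z))/|v|$ is uniformly $(2/\sigma')$-Lipschitz in the sup norm, which makes $T^\pm_{\sigma',\sigma}$ Lipschitz as a map $X_\beta\to L^1(\mu\trace K)$ and hence $U^\pm_{\sigma',\sigma}$ outright continuous; then for $\sigma'=0$ it writes $U^\pm_{0,\sigma}$ as the pointwise limit of the continuous functionals $U^\pm_{1/j,\sigma}$ via dominated convergence, invoking \cite[Theorem 24.10]{Kechris}. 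Your argument is more uniform: you bypass the case distinction entirely by noting that $f\mapsto T^+_{\sigma',\sigma}f(z)$ is a supremum of continuous functionals (hence lower semicontinuous) for every $\sigma'\geq 0$, pushing this through the integral with Fatou, and then appealing to the Moreau--Yosida regularisation to recover Baire class~1. What the paper's approach buys is the stronger conclusion of continuity when $\sigma'>0$, though this is never used afterwards; what your approach buys is brevity and the observation that $U^\pm_{0,\sigma}$ is in fact semicontinuous (not merely Baire class~1), which would equally well yield the residual set of continuity points needed in the next proposition. Your handling of measurability is adequate and no more hand-wavy than the paper's, which does not address the point at all.
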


\begin{proof}
As a first step we show that the  $T^+_{\sigma',\sigma}:X_\beta\to L^1 (\mu\trace K)$ are continuous whenever $0<\sigma'<\sigma<1$. The functions $T_{\sigma',\sigma}^+ f$ belong to $L^1(\mu\trace K)$ since $K$ has finite measure and $\lvert T_{\sigma',\sigma}^+ f\rvert\leq \mathrm{Lip}(f)+n$. In addition, it is immediate to see that:
$$\lvert T_{\sigma',\sigma}^+f(x)-T_{\sigma',\sigma}^+g(x)\rvert\leq \frac{2\lVert f-g\rVert_\infty}{\sigma'}\qquad \text{for $\mu$-almost every $x\in \R^n$},$$
thanks to the fact that if at some $x\in\R^n$ we have $(B(x,\sigma)\setminus B(x,\sigma'))\cap (\supp(\mu)\setminus X(x,V(\mu,x),\sqrt{1-\alpha^2}))=\emptyset$, then $T_{\sigma',\sigma}f(x)=-n$ for any $f\in X_\beta$. Integrating in $\mu$, we infer that:
$$\lVert T_{\sigma',\sigma}^+f(x)-T_{\sigma',\sigma}^+g(x)\rVert_{L^1(\mu\trace K)}\leq \frac{2\mu(K)}{\sigma'}\lVert f-g\rVert_\infty.$$
This implies in particular that $U^+_{\sigma',\sigma}$ is a continuous functional on $X_\beta$. Following verbatim the argument above, one can also prove the continuity of the functionals $T^-_{\sigma',\sigma}$.

In order to prove that $U_{0,\sigma}^\pm$ is of Baire class 1, thanks to \cite[Theorem 24.10]{Kechris} we just need to show that for any $f\in X_\beta$ we have:
\begin{equation}
    \lim_{j\to \infty}U_{j^{-1},\sigma}^\pm f=U_{0,\sigma}^\pm f.
    \label{convergenza}
\end{equation}
By dominated convergence theorem if we are able to show that $\lim_{j\to\infty}T_{j^{-1},\sigma}^\pm f(x)=T_{0,\sigma}^\pm f(x)$ for $\mu\trace K$-almost every $x\in B(0,r)$ then \eqref{convergenza} follows. This convergence however, follows elementary from the definition of $T_{\sigma',\sigma}^\pm f$. 
\end{proof}

\begin{proposizione}
Let $\beta< (8n^2)^{-1}\alpha$. Then for every $\sigma>0$ the continuity points of $U_{0,\sigma}^\pm$ are contained in the set
$$\mathcal{L}_\pm(\sigma):=\Big\{f\in X_\beta:\pm U_{0,\sigma}^\pm f\geq\frac{\alpha}{16n}\mu(K)\Big\}.$$
In particular both $\mathcal{L}_+(\sigma)$ and $\mathcal{L}_-(\sigma)$ are residual in $X_\beta$.
\end{proposizione}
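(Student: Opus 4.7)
The strategy is to combine the classical Baire-category fact that every Baire class $1$ function on a Polish space has a residual set of continuity points with a density argument. Since the previous proposition gives that $U^\pm_{0,\sigma}$ is Baire class $1$ on $X_\beta$, its continuity set $C^\pm$ is residual in $X_\beta$; so once I prove the inclusion $C^\pm\subseteq \mathcal{L}_\pm(\sigma)$, the residuality of $\mathcal{L}_\pm(\sigma)$ follows. I may reduce to the plus case, since $f\mapsto -f$ is a sup-norm isometry of $X_\beta$ intertwining $U^+_{0,\sigma}$ and $-U^-_{0,\sigma}$, and hence $\mathcal{L}_+$ and $\mathcal{L}_-$. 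Moreover, $C^+\subseteq\mathcal{L}_+(\sigma)$ reduces to the density of $\mathcal{L}_+(\sigma)$ in $X_\beta$: if $f\in C^+$ and $g_j\to f$ with $g_j\in\mathcal{L}_+(\sigma)$, continuity of $U^+_{0,\sigma}$ at $f$ forces $U^+_{0,\sigma}(f)\geq \tfrac{\alpha}{16n}\mu(K)$, i.e.\ $f\in\mathcal{L}_+(\sigma)$.

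The core task is thus: given $f\in X_\beta$ and $\eta>0$, to produce $g\in X_\beta$ with $\|g-f\|_\infty<\eta$ and $U^+_{0,\sigma}(g)\geq \tfrac{\alpha}{16n}\mu(K)$. First, Lusin's theorem applied to the continuous vector fields $e,e_1,\ldots,e_{n-1}$ gives a compact $K'\subseteq K$ with $\mu(K')\geq \mu(K)/2$ on which they are uniformly close to constants $e_0,e_1^0,\ldots$. Then, with $\theta\in(0,1)$ small enough that $\theta/\sqrt{1-\theta^2}\ll\beta$ and $\delta\in(0,\sigma]$ sufficiently small, I invoke Proposition \ref{alb} with $W=\mathrm{span}(e_1^0,\ldots,e_k^0)$ and a compact cone-null set $E$ satisfying $K'\subseteq B(E,\delta)$; the existence of such $E$ hinges on the purely $k$-unrectifiable hypothesis on $\mu$ and the locality of the decomposability bundle. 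This produces $\omega=\omega_{e_0,\theta,\delta}$ with $\|\omega\|_\infty\leq \eta$ growing at unit rate along $e_0$ through $B(E,\delta)$ (property (ii)) and with small transverse oscillation (property (iii)).

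The function $g$ is built from $f$ and $\omega$ by a gradient-clipping construction (e.g.\ an infimal-convolution or localized $\max$-type envelope) guaranteeing $g\in X_\beta$ and $\|g-f\|_\infty<\eta$; for every $x\in K'$, Lemma \ref{lemma:campo} then supplies a point $y\in \supp(\mu)\cap B(x,\sigma)\cap C(e(x),(n-k)^{-1}\alpha)\setminus X(x,V(\mu,x),\sqrt{1-\alpha^2})$ for which $(g(y)-g(x))/|y-x|\geq \alpha/(8n)$. The lower bound combines the $e_0$-increment $\omega(y)-\omega(x)\geq (n-k)^{-1}\alpha\,|y-x|$ coming from property (ii), the transverse control from (iii), the small tilt between $e(x)$ and $e_0$ on $K'$, and the hypothesis $\beta<(8n^2)^{-1}\alpha$ which absorbs both the residual $f$-contribution and the various errors. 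Integrating over $K'$ gives $U^+_{0,\sigma}(g)\geq (\alpha/(8n))\mu(K')\geq (\alpha/(16n))\mu(K)$. The hardest part will be the clipping step: the naive sum $g=f+\omega$ generally has $|D_e g|>1$ on $B(E,\delta)$ (since $D_{e_0}\omega\approx 1$ there), and adjusting to land in the anisotropic class $X_\beta$ with its varying frame---while preserving the crucial incremental bound at good pairs $(x,y)$---is where the principal technical difficulty lies.
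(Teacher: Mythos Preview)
Your overall architecture is correct and matches the paper: Baire class~1 gives a residual continuity set, and the work is to show density of $\mathcal{L}_+(\sigma)$, i.e.\ given $f\in X_\beta$ to produce nearby $g\in X_\beta$ with $U^+_{0,\sigma}g$ large, using the functions $\omega$ from Proposition~\ref{alb} to create steep increments in the $e$-direction at points of $K$. The reduction to the plus case via $f\mapsto -f$ is fine.

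The genuine gap is the step you yourself flag as hardest: you propose to combine $f$ and $\omega$ by an unspecified ``gradient-clipping'' (inf-convolution or $\max$-envelope) and hope to stay in $X_\beta$ while keeping the good increment. This does not obviously work. The obstruction is that $f$ may already have $|D_e f|\approx 1$ on a large set, so any additive perturbation with $D_e$-component near $1$ (which is what you need from $\omega$ via property~(ii)) pushes $|D_e g|$ past $1$; a global clip that kills this excess also kills the increment you are trying to create. The paper resolves this not by clipping but by \emph{weighting}: it first replaces $f$ by a smooth $h\in X_\beta$ with $\|f-h\|_\infty$ small (so that $\nabla h$ exists everywhere and varies continuously), then covers $K$ by small disjoint balls $B(y_j,r_j)$ on which the frame and $\nabla h$ are essentially constant, and on each ball adds $\big(1-\langle\nabla h(y_j),e(y_j)\rangle\big)\,\phi_j\,\omega_j$, with $\phi_j$ a cutoff and $\omega_j$ a local version of $\omega$ built from a cone-null subset of that ball. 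The coefficient $1-\langle\nabla h(y_j),e(y_j)\rangle\in[0,2]$ is exactly the ``room'' left in the constraint $|D_e\cdot|\le 1$, so the sum (after a harmless $(1-2\chi)$ shrink) lands back in $X_\beta$, while at points of the cone-null set one gets $D_e g\approx 1$ and hence $T^+_{0,\sigma}g\ge \alpha/4(n-k)$ via \eqref{e:conipieni}. Your single-global-$\omega$ with Lusin-constant frame does not supply this weighting mechanism, and the claim $\mu(K')\ge\mu(K)/2$ for a set where the continuous frame is close to a \emph{fixed} constant is not free (the allowed closeness must be tiny compared to $\beta$); the paper's ball-by-ball localization is what replaces this and what makes the smooth-coefficient trick possible.
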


\begin{proof}
We prove the result just for  $U^+_{0,\sigma}$. The argument to prove the analogous statement for $U_{0,\sigma}^-$ can be obtained following verbatim that for $U^+_{0,\sigma}$ while making few suitable changes of sign.

Assume by contradiction that $g$ is a continuity point for $U_{0,\sigma}^+$ contained in $X_\beta\setminus\mathcal{L}_+(\sigma)$. 
It is easy to see by convolution and rescaling that smooth functions are dense in $X_\beta$. Since $g$ is a continuity point for $U_{0,\sigma}^+$, for any $\ell\in\N$ we can find a smooth function $h_\ell\in X_\beta$ such that $\lVert g-h_\ell\rVert_\infty\leq 2^{-\ell}$ and $U_{0,\sigma}^+h_\ell\leq\alpha\mu(K)/8n$ and for \emph{any} $x\in \R^n$ we have
 $$\lvert D_{e}h_\ell(x)\rvert\leq 1 \qquad \text{and}\qquad \lvert D_{e_j}h_\ell(x)\rvert\leq \beta\quad\text{for any $j=1,\ldots,n-1$.}$$
% We now fix an $\ell\in \N$ and we denote
% \begin{equation}
%     \begin{split}
%         K^+:=\{x\in K:T_{0,\sigma}^+h_\ell>0\}\qquad \text{and}\qquad
%  K^-:=\{x\in K:T_{0,\sigma}^-h_\ell\leq 0\}.
%     \end{split}
% \end{equation}

%Let $\ell\in\N$ to be chosen sufficiently big at a later stage in the proof and let   $0<\eta,\chi<(n2^{\ell})^{-1}\beta^2$, $0<\varepsilon<\beta\chi^2$.

Let $$A:=\{y\in K:T_{0,\sigma}^+h_\ell(y)\leq \alpha/8n\}.$$ Thanks to Besicovitch's covering theorem and \cite[Lemma 7.5]{AlbMar} we can cover $\mu$-almost all $A$ with countably many closed and disjoint balls $\{B(y_j,r_j)\}_{j\in\N}$ such that, for $0<\eta,\chi<(n2^{10\ell})^{-1}\beta^2$
\begin{itemize}
    \item[(i)] $r_j\leq 2^{-\ell}$, $\mu(A\cap B(y_j,r_j))\geq (1-\eta)\mu(B(y_j,r_j))$ and $\mu(\partial B(y_j,r_j))=0$,
    \item[(\hypertarget{ii}{ii})] for any $z\in B(y_j,r_j)$
    $$\lvert e(z)-e(y_j)\rvert+\lvert \nabla h_\ell(y_j)-\nabla h_\ell(z)\rvert+\Big\lvert\frac{h_\ell(z)-h_\ell(y_j)}{\lvert z-y_j\rvert}-\nabla h_\ell(z)\Big[\frac{z-y_j}{\lvert z-y_j\rvert}\Big]\Big\rvert\leq \chi^4,$$
    \item[(iii)] for any $j\in\N$ we can find $0<\rho_j<(n2^{\ell})^{-1}\beta^2$ and a compact subset $\tilde A_j$ of $A\cap B(y_j,(1-2\rho_j)r_j)$ such that $\mu(\tilde A_j)\geq (1-2\eta)\mu(B(y_j,r_j))$ and $\tilde A_j$ is $C(e(y_j),2^{-10\ell}\chi^2)$-null.
\end{itemize}
For any $j\in\N$ we let $\phi_j$ be a smooth $2(\rho_j r_j)^{-1}$-Lipschitz function such that $0\leq \phi_j\leq 1$, $\phi_j=1$ on $B(y_j,(1-\rho_j)r_j)$ and it is supported on $B(y_j,r_j)$.
Now fix $0<\varepsilon<\beta\chi^2$. Thanks to Proposition \ref{alb} we can find $\delta_j\leq 2^{-j}\rho_jr_j$ and a function $\omega_j$ such that: 
\begin{enumerate}
\item $0\leq \omega_j(x)\leq \varepsilon\beta\rho_j r_j$ for any $x\in\R^n$,
\item $\omega_j(x)\leq \omega_{j}(x+se(y_j))\leq \omega_{j}(x)+s$, for every $s>0$ and any $x\in\R^n$. Moreover, if the segment $[x,x+se(y_j)]$ is contained in $B(\tilde A_j,\delta_j)$, then
$\omega_j(x+se(y_j))=\omega_j(x)+s$,
\item $\lvert \omega_{j}(x+v)-\omega_{j}(x)\rvert\leq 2^{-9\ell}\chi^2\lvert v\rvert$, for every $v\in e(y_j)^{\perp}$,
\item$\omega_j$ is $1+2^{-9\ell}\chi^2$-Lipschitz.
\end{enumerate}
We thus define the function $g_\ell$ as
\begin{equation}\label{def:gell}
    g_\ell:=(1-2\chi)
    \Big(h_\ell+\sum_{j\in\N}[-
    \langle\nabla h_\ell(y_j), e(y_j)\rangle+1]\phi_j \omega_j\Big).
\end{equation}
First we estimate the supremum distance 
\begin{equation}
\begin{split}
     \lVert g-g_\ell\rVert_\infty\leq& \lVert g-h_\ell\rVert_\infty+2\chi\lVert h_\ell\rVert_\infty+(1-2\chi)\lVert h_\ell-(1-2\chi)^{-1}g_\ell\rVert_\infty\\
     \leq &2^{-\ell}+\chi(\lVert g\rVert_\infty+2^{-\ell})+(1-2\chi)\Big\lVert\sum_{j\in\N}(1-\langle\nabla h_\ell(y_j),e(y_j)\rangle)\Big\rVert_\infty\\
     \leq & 2^{-\ell}(2+\lVert g\rVert_\infty+(1+(n-1)\beta^2)^{1/2})\leq 2^{-\ell}(4+\lVert g\rVert_\infty),
\end{split}
\end{equation}
where the last inequality follows from the choice of $\beta$.
The above computation shows that the sequence $g_\ell$ converges in the supremum distance.

Let us now prove that $g_\ell\in X_\beta$. If $z\not\in \cup_j B(y_j,r_j)=:\mathcal{I}$ then the functions $h_\ell$ and $g_\ell$ and their gradients coincide at $z$ and hence $g_\ell$ satisfy \eqref{condizioniXb} on $\mathcal{I}^c$. If on the other hand $z\in \mathcal{I}$, there exists a unique $j\in\N$ such that $z\in B(y_j,r_j)$. In particular, differentiating \eqref{def:gell} we get
\begin{equation}
    \begin{split}
\nabla g_\ell(z)=(1-2\chi)\Big[\nabla h_\ell(z)+&[-\langle\nabla h_\ell(y_j), e(y_j)\rangle+1]\nabla \phi_j(z)\omega_j(z)
+[-\langle\nabla h_\ell(y_j), e(y_j)\rangle+1]\phi_j(z)\nabla\omega_j(z)\Big].
\nonumber
    \end{split}
\end{equation}
So that, for $\Leb^n$-almost every $x\in\R^n$ we have
\begin{equation}
    \begin{split}
      & \lvert\langle \nabla g_\ell(z),e(z)\rangle\rvert\leq(1-2\chi)\Big\lvert\langle \nabla h_\ell(z),e(z)\rangle+[-\langle\nabla h_\ell(y_j), e(y_j)\rangle+1]\phi_j(z)\langle\nabla\omega_j(z),e(z)\rangle \Big\rvert+4\varepsilon\beta,
      \nonumber
     \end{split}
\end{equation}
where in the estimate above we have used the fact that $\lvert-\langle\nabla h_\ell(y_j), e(y_j)\rangle+1\rvert\leq 2$, $\lVert \nabla\phi\rVert_{L^\infty(\Leb^n)}\leq 2(\rho_jr_j)^{-1}$ and $\lVert \omega_j\rVert_\infty\leq \varepsilon\beta\rho_j r_j$. Now we replace $z$ with $y_j$ in the first addendum, by means of the estimate (\hyperlink{ii}{ii}), obtaining
\begin{equation}
    \begin{split}
     \lvert\langle \nabla g_\ell(z),e(z)\rangle\rvert  \leq 3(1-2\chi)\chi^2+(1-2\chi)\Big\lvert\langle\nabla h_\ell(y_j), e(y_j)\rangle\big(1-\phi_j(z)\langle\nabla\omega_j(z),e(z)\rangle\big)+\phi_j(z)\langle\nabla\omega_j(z),e(z)\rangle \Big\rvert+2\varepsilon\beta.
       \nonumber
       \end{split}
       \end{equation}
       Finally, substituting $z$ with $y_j$ in the argument of the vector field $e$ we deduce thanks to (\hyperlink{ii}{ii}) that
       \begin{equation}
    \begin{split}
       \lvert\langle \nabla g_\ell(z),e(z)\rangle\rvert \leq& 3(1-2\chi)\chi^2+2\varepsilon\beta+6(1-2\chi)(1+2^{-9\ell}\chi)\chi^2\\
       &\qquad\qquad\quad\,\,+(1-2\chi)\big\lvert\langle\nabla h_\ell(y_j), e(y_j)\rangle\big(1-\phi_j(z)\langle\nabla\omega_j(z),e(y_j)\rangle\big)+\phi_j(z)\langle\nabla\omega_j(z),e(y_j)\rangle \big\rvert\\
       \leq &3(1-2\chi)\chi^2+2\varepsilon\beta+6(1-2\chi)(1+2^{-9\ell}\chi)\chi^2+(1-2\chi)\leq 1,
       \nonumber
     \end{split}
\end{equation}
where the the last inequality follows from the choice of $\chi,\beta,\varepsilon$. Furthermore, for any $q  =1,\ldots,n-1$ we infer similarly that:
\begin{equation}
    \begin{split}
       & \lvert g_\ell(z+t e_q(z))-g_\ell(z)\rvert \leq (1-2\chi)\big\lvert h_\ell(z+ t\,e_q(z))-h_\ell(z)\big\rvert\\
        &\qquad\qquad\qquad\qquad\qquad\qquad+(1-2\chi)\lvert [1-\langle\nabla h_\ell(y_j),e(y_j)\rangle](\phi_j(z+t\,e_q(z))-\phi_j(z))\omega_j(z)\rvert\\
        &\qquad\qquad\qquad\qquad\qquad\qquad+(1-2\chi)\lvert[1-\langle\nabla h_\ell(y_j),e(y_j)\rangle]\phi_j(z)(\omega_j(z+t\,e_q(y_j))-\omega_j(z))\rvert\\
         &\qquad\qquad\qquad\qquad\qquad\qquad+(1-2\chi)\lvert[1-\langle\nabla h_\ell(y_j),e(y_j)\rangle]\phi_j(z)(\omega_j(z+t\,e_q(z))-\omega_j(z+t\,e_q(y_j)))\rvert+o(\lvert t\rvert)\\
         \leq &(1-2\chi)\beta \lvert t\rvert+4(1-2\chi)(\beta\varepsilon\rho_jr_j)(\rho_jr_j)^{-1}\lvert t\rvert+3\cdot2^{-9\ell}(1-2\chi)\chi^2 \lvert t\rvert+3(1-2\chi)(1+2^{-9\ell}\chi)\chi^4\lvert t\rvert+o(\lvert t\rvert)\\
         \leq& (1-2\chi)(\beta+4\beta\varepsilon+4\cdot 2^{-9\ell} \chi^2+4(1+2^{-9\ell}\chi)\chi^4) \lvert t\rvert\leq (1-2\chi)(1+10\chi^2)\beta \lvert t\rvert+o(\lvert t\rvert)< \beta \lvert t\rvert,
         \nonumber
    \end{split}
\end{equation}
provided $|t|$ is chosen sufficiently small (depending on $z$) and
where the second to last inequality holds thanks to the choice of $\chi,\varepsilon$ and for $\ell$ sufficiently big, in such a way that $2^{-\ell}\leq \beta$.
The above bound implies that in particular
\begin{equation}
\begin{split}
  \lvert \langle \nabla g_\ell(z),e_q(z)\rangle\rvert\leq \beta\text{ for $\Leb^n$-almost every $x\in\R^n$.}
\end{split}
\end{equation}
This concludes the proof that for $\ell$ sufficiently big we have that $g_\ell\in X_\beta$.

The next step in the proof is to show that the functions $g_\ell$ satisfy the inequality $U_{0,\sigma}^+g_\ell \geq \alpha\mu(K)/8$ for $\ell$ sufficiently big, and this contradicts the continuity of $U_{0,\sigma}^+$ at $g$. In order to see this, we first estimate from below the partial derivative of $g_\ell$ along $e$ on the points of $\tilde{A}_j$ for any $j$. So, let us fix for any $j\in\N$ a point $z\in \tilde A_j$. Then, let $0<\lambda_0<\delta_j$ be so small that $\phi_j(z+\lambda e(z))=1$ for any $0<\lambda<\lambda_0$ and note that
\begin{equation}
\begin{split}
      \langle g_\ell(z+\lambda& e(z))-g_\ell(z),e(z)\rangle\geq (1-2\chi)\Big[(h_\ell(z+\lambda e(z))-h_\ell(z))
  +[1-\langle\nabla h_\ell(y_j),e(y_j)\rangle](\omega_j(z+\lambda e(z))-\omega_j(z))\Big]\\
  \geq&  (1-2\chi)\big[-\chi^2\lambda+\lambda\langle\nabla h_\ell(z),e(z)\rangle+[1-\langle\nabla h_\ell(y_j),e(y_j)\rangle]\lambda\big]
  \geq \lambda(1-2\chi)(1-4\chi^2)\geq (1-6\chi)\lambda.
    \nonumber
\end{split}
\end{equation}
This implies in particular that for any unitary vector $v\in C(e(z),(n-k)^{-1}\alpha)$, for any $\lambda>0$ we have
\begin{equation}
\begin{split}
       g_\ell(z+\lambda v)-g_\ell(z)\geq& g_\ell (z+\lambda v)-g_\ell(z+\lambda \langle e(z),v\rangle e(z))+ g_\ell(z+\lambda \langle e(z),v\rangle e(z))-g_\ell(z)\\
       \geq&\alpha(n-k)^{-1}(1-6\chi)\lambda-\beta\sqrt{n-1}\lambda\geq (\alpha/2(n-k))\lambda-\beta n\lambda>\alpha \lambda/4(n-k),
\end{split}
\end{equation}
where the last inequality follows from the choice of $\beta$. However, thanks to choice of $K$, see \eqref{e:conipieni}, we infer that $$T_{0,\sigma}^+g_\ell(z)\geq \alpha/4(n-k)\qquad\text{ for any $z\in \cup_j \tilde{A}_j$.}$$ This allows us to infer that
 \begin{equation}
 \begin{split}
     U_{0,\sigma}^+ g_\ell=&\int_AT^+_{0,\sigma}g_\ell d\mu+\int_{K\setminus A}T^+_{0,\sigma}g_\ell d\mu\geq\int_AT^+_{0,\sigma}g_\ell d\mu+ \alpha\mu(K\setminus A)\\
     = &\int_{A\setminus \cup_j     \tilde A_j}  T^+_{0,\sigma}g_\ell d\mu+\sum_{j\in\N}\int _{A_j}T^+_{0,\sigma}g_\ell d\mu+ \alpha\mu(K\setminus A)\\
     \geq& -\mu(A\setminus \bigcup_{j\in\N} A_j)\mathrm{Lip(g_\ell)}+\frac{\alpha}{4(n-k)}\mu( \bigcup_{j\in\N} A_j)+\alpha\mu(K\setminus A)\\
     \geq &-2\mu(A\setminus \bigcup_{j\in\N} A_j)+\frac{\alpha}{4(n-k)}\mu\Big((K\setminus A)\cup \bigcup_{j\in\N} A_j\Big)\\
     \geq& -4\eta\mu(K)+\frac{\alpha}{4(n-k)}(1-2\eta)\mu(K)\geq \frac{\alpha}{8n}\mu(K).
  \nonumber
 \end{split}
\end{equation}
for $\ell$ sufficiently big.\\ 

Since the functional $U^+_{0,\sigma}$ is of Baire class 1, thanks to  \cite[Chapter 7]{Oxtobi} we know that the set of the continuity points of $U^+_{0,\sigma}$ is residual. However, since thanks to the above argument $\mathcal{L}_+(\sigma)$ contains the continuity points of $U^+_{0,\sigma}$, we conclude that $\mathcal{L}_+(\sigma)$ is residual in $X_\beta$.
\end{proof}

\begin{proof}[Proof of Proposition \ref{coroll}]
Let $\beta:=(16n^2)^{-1}\alpha$ and let $\mathfrak{c}(\alpha):=\alpha /16n$ note that since the countable intersection of residual sets is residual, we can find a Lipschitz function $f$ in $X_\beta$ such that $f\in\cap _{\sigma\in \mathbb{Q}\cap (0,1)}(\mathcal{L}_+(\sigma)\cap \mathcal{L}_-(\sigma))$. In particular, for any $\sigma>0$ we have
$$U_{0,\sigma}^-f\leq -\mathfrak{c}(\alpha)\mu(K)<\mathfrak{c}(\alpha)\mu(K)\leq U_{0,\sigma}^+f.$$
In particular, defined $\Delta T_\sigma f(z):=T_{0,\sigma}^+f(z)-T_{0,\sigma}^-f(z)$ and $C_\sigma:=\{z\in K:\Delta T_\sigma(z)>\mathfrak{c}(\alpha)\}$
\begin{equation}
    2\mathfrak{c}(\alpha)\mu(K)\leq \int_K \Delta T_\sigma (z)d\mu\trace K(z)\leq \mu(K\setminus C_\sigma)\mathfrak{c}(\alpha)+2\mathrm{Lip}(f)\mu(C_\sigma).
    \nonumber
\end{equation}
Thanks to the above computation we infer in particular that $\mu(C_\sigma)\geq \mathfrak{c}(\alpha)\mu(K)/2\mathrm{Lip}(f)$ for any $\sigma>0$. Thus, defined $E:=\bigcap_{j\in\N}\bigcup_{l\geq j}C_{1/l}$, Fatou's Lemma implies that:
$$\frac{\mathfrak{c}(\alpha)\mu(K)}{2\mathrm{Lip}(f)}\leq \limsup_{p\to \infty} \mu(C_{1/p})\leq \int\limsup_{p\to \infty}\mathbb{1}_{C_{1/p}}d\mu=\mu(E),$$
where $\mathbb{1}_{C_{1/p}}$ denotes the indicator function of the set $C_{1/p}$. Therefore, $E$ is a Borel set of positive $\mu$-measure such that for $\mu$-almost every $z\in E$ there exists a sequence of natural numbers (depending on $z$) such that $p\to\infty$ and $\Delta T_{1/p}>\mathfrak{c}(\alpha)$. In particular, for $\mu$-almost every $z\in E$ we have:
\begin{equation}
  \mathfrak{c}(\alpha) < \liminf_{p\to\infty}(T^+_{0,1/p}f(z)-T^-_{0,1/p}f(z))=\lim_{p\to\infty}(T^+_{0,1/p}f(z)-T^-_{0,1/p}f(z)),
\end{equation}
where the last identity comes from the fact that $p\mapsto T^+_{0,1/p}f(z)$ is decreasing and  $p\mapsto T^-_{0,1/p}f(z)$ is increasing for any $z$. However, thanks to the definition of $T^+_{0,1/p}f$ and $T^-_{0,1/p}f$ it is immediate to see that for $\mu$-almost every $z\in E$ we can find a sequence $y_i=y_i(z)\in \supp(\mu)\cap C(e(z),\alpha)\cap B(z,i^{-1})$ such that
    $$\frac{y_i-z}{\lvert y_i-z\rvert}\to v\qquad \text{and}\qquad \limsup_{i\to \infty}\frac{f(y_i)-f(z)}{\lvert y_i-z\rvert}-\liminf_{i\to \infty}\frac{f(y_i)-f(z)}{\lvert y_i-z\rvert}>\frac{\mathfrak{c}(\alpha)}{2}.$$
\end{proof}
\section{Proof of Theorem \ref{t:main}}
Without loss of generality we can restrict our attention to finite measures. Assume that $\mu$ is a finite sum of rectifiable measures. For every $\varepsilon>0$ there exist finitely many disjoint, compact submanifolds $S_j$ for $(j=1,\dots, N)$ of class $C^1$ (of any dimension between $0$ and $n$) such that  denoting $K:=\bigcup_{j=1}^N S_j$ it holds $\mu(\R^n\setminus K)<\varepsilon/2$. Consider now any Lipschitz function $f:\R^n\to\R$. By \cite[Theorem 1.1 (i)]{AlbMar} and Lusin's theorem, we can find a closed subset $C\subset K$ such that $\mu(K\setminus C)<\varepsilon/2$ and for every $x\in C$ the derivative $d_{V(\mu,x)}f(x)$, see \cite[\S 2.1]{AlbMar}, exists and is continuous. Let $d:C\to\R^n$ be obtained extending $d_{V(\mu,\cdot)}f$ to be zero in the directions orthogonal to $V(\mu,\cdot)$. By \cite[Proposition 2.9 (iii)]{AlbMar} and since the $S_j$'s have positive mutual distances, we can apply Whitney's extension theorem, see \cite[Theorem 6.10]{EvansGar}, deducing that there exists a function $g:\R^n\to\R$ of class $C^1$ such that $g=f$ and $dg=d$ on $C$. Hence Lipschtz functions admit a Lusin type approximation with respect to $\mu$ with functions of class $C^1$.

Assume now that $\mu$ is not a finite sum of rectifiable measures and write $\mu=\sum_{k=0}^n\mu\trace E_k$, where $E_k:=\{x\in\R^n:\dim(V(\mu,x))=k\}$. Then there exists $k\in \{0,\dots, n-1\}$ such that $\mu\trace E_k$ is not a $k$-rectifiable measure: the case $k=n$ can be excluded by combining \cite{AlbMar} and \cite{DPR}, so to ensure that a measure on $\R^n$ whose decomposability bundle has dimension $n$ is absolutely continuous with respect to the Lebesgue measure $\Leb^n$. Let $\nu$ be the supremum of all $k$-rectifiable measures $\sigma\leq\mu\trace E_k$ and let $E$ be any Borel set such that $\nu=\mu\trace (\R^n\setminus E)$. We claim that $\mu\trace E$ satisfies the assumptions of Lemma \ref{lemma:coni}. To prove the claim, consider a $k$-dimensional surface $S$ of class $C^1$ and assume by contradiction that $\eta:=\mu\trace (E\cap S)$ is non zero. If $G=\{\mu_t\}_{t\in I}\in\mathscr{F}_\eta$ is a family as in \cite[Proposition 2.8 (ii)]{AlbMar}, then $\supp(\mu_t)\subset S$ for almost every $t\in I$. This implies that $V(\eta,x)={\mathrm{Tan}(S,x)}$ for $\eta$-almost every $x$. Fix now a point $y\in\supp(\eta)$ and observe that, denoting $W:=\mathrm{Tan}(S,y)$, the family $\{(p_W)_\sharp\mu_t\}_{t\in I}$ belongs to $\mathscr{F}_{(p_W)_\sharp\eta}$ and that $V((p_W)_\sharp\eta, \cdot)$ is $k$-dimensional in a neighbourhood of $p_W(y)$. By \cite[Corollary 1.12]{DPR}, the Radon-Nikodym derivative of $(p_W)_\sharp\eta$ with respect to $\Haus^k\trace W$ is positive and finite in a neighbourhood of $p_W(y)$. Since $p_W$ is bi-Lipschitz from $S$ to $W$ in a (relative) neighbourhood of $y$, this implies that $\eta$ has a non trivial absolutely continuous part with respect to $\Haus^k\trace S$, which contradicts the maximality of $\sigma$. Hence, $\mu\trace E$ satisfies the assumptions of Lemma \ref{lemma:coni}.

Let $f:\R^n\to\R$ be the Lipschitz function obtained from Proposition \ref{coroll}. Clearly there exists no function $g:\R^n\to\R$ of class $C^1$ which coincides with $f$ on a set of positive measure with respect to $\mu\trace E$, hence Lipschtz functions do not admit a Lusin type approximation with respect to $\mu$ with functions of class $C^1$.

\printbibliography

\section*{Acknowledgments}
A. Ma. acknowledges partial support from 
PRIN 2017TEXA3H\_002 \emph{Gradient flows, Optimal Transport and Metric Measure Structures}. A. Me. is supported by the Simons Foundation grant 601941, GD.
\end{document}